\journal{Statistics \& Probability Letters}
\newtheorem{theorem}{Theorem}
\newtheorem{corollary}{Corollary}
\theoremstyle{remark}
  \newtheorem*{remark}{Remark}
\theoremstyle{definition}
\newcommand{\E}{\mathbb{E}}
\newcommand{\Prob}{\mathbb{P}}
\newcommand{\norm}[1]{\left\Vert{#1}\right\Vert}
\begin{document}

\begin{frontmatter}

\title{Identifying the Spectral Representation of Hilbertian Time Series}

\author[mymainaddress]{Eduardo Horta\corref{mycorrespondingauthor}}
\cortext[mycorrespondingauthor]{Corresponding author}
\ead{eduardo.horta@ufrgs.br}

\author[mymainaddress]{Flavio Ziegelmann}

\address[mymainaddress]{Universidade Federal do Rio Grande do Sul, Department of Statistics, 9500 Bento Gonçalves Av., 43--111, Porto Alegre, RS, Brazil 91509-900.}

\begin{abstract}
We provide $\sqrt{n}$-consistency results regarding estimation of the spectral representation of covariance operators of Hilbertian time series, in a setting with imperfect measurements. This is a generalization of the method developed in \cite{bathia2010identifying}. The generalization relies on an important property of centered random elements in a separable Hilbert space, namely, that they lie almost surely in the closed linear span of the associated covariance operator. We provide a straightforward proof to this fact. This result is, to our knowledge, overlooked in the literature. It incidentally gives a rigorous formulation of \textsc{pca} in Hilbert spaces.
\end{abstract}

\begin{keyword}
covariance operator\sep dimension reduction\sep Hilbertian time series\sep $\sqrt{n}$-consistency\sep functional \textsc{pca}
\MSC[2010] 60G10\sep 62G99\sep 62M99
\end{keyword}

\end{frontmatter}


\section{Introduction}
In this paper, we provide theoretical results regarding estimation of the spectral representation of the covariance operator of stationary Hilbertian time series. This is a generalization of the method developed in \cite{bathia2010identifying} to a setting of random elements in a separable Hilbert space. The approach taken in \cite{bathia2010identifying} relates to functional \textsc{pca} and, similarly to the latter, relies strongly on the Karhunen-Loève (K-L) Theorem. The authors develop the theory in the context of curve time series, with each random curve in the sequence satisfying the conditions of the K-L Theorem which, together with a stationarity assumption, ensures that the curves can all be expanded in the same basis -- namely, the basis induced by their zero-lag covariance function. The idea is to identify the dimension of the space $M$ spanned by this basis (finite by assumption), and to estimate $M$, when the curves are observed with some degree of error. Specifically, it is assumed that the statistician can only observe the curve time series $\left(Y_t\right)$, where
\begin{equation*}
	Y_t = X_t + \epsilon_t,
\end{equation*}
whereas the curve time series of interest is actually $\left(X_t\right)$. Here $Y_t$, $X_t$ and $\epsilon_t$ are random functions (curves) defined on $\left[0,1\right]$. Estimation of $M$ in this framework was previously addressed in \cite{hall2006assessing} assuming the curves are iid (in $t$), a setting in which the problem is indeed unsolvable in the sense that one cannot separate $X_t$ from $\epsilon_t$. \cite{hall2006assessing} propose a Deus ex machina solution which consists in assuming that $\epsilon_t$ goes to $0$ as the sample size grows. \cite{bathia2010identifying} in turn resolve this issue by imposing a dependence structure in the evolution of $\left(X_t\right)$. Their key assumption is that, at some lag $k$, the $k$-th lag autocovariance matrix of the random vector composed by the Fourier coefficients of $X_t$ in $M$, is full rank. In our setting this corresponds to Assumption~\ref{thm:bathia-thm-1-A1} (see below).

In \cite{bathia2010identifying} it is assumed that each of the stochastic processes $\left(X_t\left(u\right):\,u\in\left[0,1\right]\right)$ satisfy the conditions of the K-L Theorem (and similarly for $\epsilon_t$), and as a consequence the curves are in fact random elements with values in the Hilbert space $L^2\left[0,1\right]$. Therefore, since every separable Hilbert space is isomorphic to  $L^2\left[0,1\right]$, the idea of a generalization to separable Hilbert spaces of the aforementioned methodology might seem, at first, rather dull. The issue is that \emph{in applications transforming the data (that is, applying the isomorphism) may not be feasible nor desirable}. For instance, the isomorphism may involve calculating the Fourier coefficients in some `rule-of-thumb' basis which might yield infinite series even when the curves are actually finite dimensional.

The approach that we take here relies instead on the key feature that a centered Hilbertian random element of strong second order, lies almost surely in the closed linear span of its corresponding covariance operator. This result allows one to dispense with considerations of `sample path properties' of a random curve by addressing the spectral representation of a Hilbertian random element directly. In other words, the Karhunen-Loève Theorem is just a special case\footnote{This is not entirely true since the Karhunen-Loève Theorem states uniform (in $\left[0,1\right]$) $L^2\left(\Omega\right)$ convergence.} of a more general phenomena. The result below (which motivates -- and for that matter, justifies -- our approach) is not a new one: it appears, for example, in a slightly different guise as an exercise in \cite{vakhania1987probability}. However, it is in our opinion rather overlooked in the literature. The proof that we give is straightforward and, to our knowledge, a new one. In this paper $H$ is always assumed to be a real Hilbert space, but with minor adaptations all stated results hold for complex $H$.

\begin{theorem}\label{thm:xi-orthogonal-ker}
Let $H$ be a separable Hilbert space, and assume $\xi$ is a centered random element in $H$ of strong second order, with covariance operator $R$. Then $\xi \perp \ker(R)$ almost surely.
\end{theorem}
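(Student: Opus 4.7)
The plan is to exploit the defining identity $\langle Rx, y\rangle = \E[\langle \xi, x\rangle \langle \xi, y\rangle]$ of the covariance operator (which is well-defined since $\E\norm{\xi}^2 < \infty$). Taking $y = x$ for any $x \in \ker(R)$ gives
\[
\E\bigl[\langle \xi, x\rangle^{2}\bigr] = \langle Rx, x\rangle = 0,
\]
and hence $\langle \xi, x\rangle = 0$ almost surely. The sole obstacle is that the exceptional null set a priori depends on $x$, and $\ker(R)$ is in general uncountable (possibly infinite-dimensional), so one cannot naively take a union over all $x \in \ker(R)$.

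This is where separability does all the work. Since $\ker(R)$ is a closed subspace of the separable Hilbert space $H$, it is itself separable, and therefore admits a countable orthonormal basis $\{e_k\}_{k \geq 1}$. Applying the previous observation to each $e_k$ individually and intersecting the corresponding full-measure events yields a single event $\Omega_0$ with $\Prob(\Omega_0) = 1$ on which $\langle \xi, e_k\rangle = 0$ simultaneously for every $k \geq 1$.

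I would then conclude by continuity of the inner product: fix $\omega \in \Omega_0$ and let $y \in \ker(R)$ be arbitrary. Expanding $y = \sum_k \langle y, e_k\rangle e_k$ in the chosen basis and using continuity of $\langle \xi(\omega), \cdot\rangle$,
\[
\langle \xi(\omega), y\rangle \;=\; \sum_{k} \langle y, e_k\rangle \, \langle \xi(\omega), e_k\rangle \;=\; 0.
\]
Thus $\xi(\omega) \perp \ker(R)$ for every $\omega \in \Omega_0$, which is exactly the claim. (Equivalently, since $R$ is self-adjoint, $\xi \in \overline{\Ran(R)}$ almost surely, which is the formulation useful downstream for the spectral representation.)

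The only genuinely delicate step is the passage from ``$\langle \xi, x\rangle = 0$ a.s. for each fixed $x \in \ker(R)$'' to a single exceptional set valid uniformly over the whole kernel, and this is resolved cleanly by exhibiting a countable orthonormal basis of $\ker(R)$. No further analytic input (e.g.\ regularity of sample paths or the full Karhunen–Loève machinery) is needed, which is precisely the point the authors wish to emphasize.
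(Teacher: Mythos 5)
Your proof is correct and follows essentially the same route as the paper's: pick a countable orthonormal basis of $\ker(R)$, use $\E\langle\xi,e_k\rangle^2=\langle R(e_k),e_k\rangle=0$ to kill each coordinate on a full-measure event, and take the countable intersection. Your write-up is if anything slightly more explicit about the final passage from the basis vectors to all of $\ker(R)$, which the paper leaves implicit.
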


\begin{corollary}\label{thm:hilbert-representations}
In the conditions of Theorem~\ref{thm:xi-orthogonal-ker}, let $\left(\lambda_j:\,j\in J\right)$ be the (possibly finite) non-increasing sequence of nonzero eigenvalues of $R$, repeated according to multiplicity, and let $\left\{\varphi_j:\,j\in J\right\}$ denote the orthonormal set of associated eigenvectors. Then
\begin{enumerate}[label={\textit{(\roman*)}}, ref={\textit{(\roman*)}}]
\item \label{theorem-lemma-hilbert-representations-item-1}$\xi(\omega) = \sum_{j\in J}\langle \xi(\omega),\varphi_j\rangle\varphi_j$ in $H$, almost surely;
\item \label{theorem-lemma-hilbert-representations-item-2}$\xi = \sum_{j\in J} \langle \xi,\varphi_j\rangle \varphi_j$ in $L^2_\Prob(H)$.
\end{enumerate}
Moreover, the scalar random variables $\langle\xi,\varphi_i\rangle$ and $\langle\xi,\varphi_j\rangle$ are uncorrelated if $i\neq j$, with $\E\langle\xi,\varphi_j\rangle^2 = \lambda_j$.
\end{corollary}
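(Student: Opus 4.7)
The plan is to combine Theorem~\ref{thm:xi-orthogonal-ker} with the spectral theorem for compact, self-adjoint, nonnegative operators. Since $R$ is trace class, this spectral theorem produces the orthogonal decomposition $H = \ker(R) \oplus \overline{\Ran(R)}$, together with the fact that $\{\varphi_j : j \in J\}$ is a complete orthonormal basis of $\overline{\Ran(R)}$ (the eigenvectors for the zero eigenvalue, if any, form a basis of $\ker(R)$ and are excluded from this set).

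To obtain \ref{theorem-lemma-hilbert-representations-item-1}, I would first invoke Theorem~\ref{thm:xi-orthogonal-ker} to conclude that $\xi(\omega) \in \ker(R)^\perp = \overline{\Ran(R)}$ for almost every $\omega$. For any such $\omega$, Parseval's identity with respect to the orthonormal basis $\{\varphi_j\}$ of $\overline{\Ran(R)}$ yields the claimed expansion of $\xi(\omega)$ in $H$. To upgrade this to convergence in $L^2_\Prob(H)$, i.e.\ \ref{theorem-lemma-hilbert-representations-item-2}, I would set $S_n := \sum_{j \leq n} \langle \xi, \varphi_j\rangle \varphi_j$ and use Bessel's inequality to bound $\norm{\xi - S_n}^2 \leq \norm{\xi}^2$ pointwise; since $\E\norm{\xi}^2 < \infty$ by the strong second order assumption, dominated convergence transfers the almost sure convergence from \ref{theorem-lemma-hilbert-representations-item-1} to the $L^2_\Prob(H)$ statement.

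For the final assertion, I would compute directly from the definition of the covariance operator: for $i,j \in J$,
\begin{equation*}
  \E\bigl[\langle \xi,\varphi_i\rangle\,\langle \xi,\varphi_j\rangle\bigr] = \langle R\varphi_i,\varphi_j\rangle = \lambda_i\,\langle \varphi_i,\varphi_j\rangle = \lambda_i\,\delta_{ij},
\end{equation*}
which simultaneously yields the uncorrelatedness of the Fourier coefficients and the identification $\E\langle \xi,\varphi_j\rangle^2 = \lambda_j$. The only nontrivial step here is \ref{theorem-lemma-hilbert-representations-item-1}, and the whole weight of the argument sits on Theorem~\ref{thm:xi-orthogonal-ker}: without it, Parseval would only recover the orthogonal projection of $\xi$ onto $\overline{\Ran(R)}$, and one would be left with an unwanted residual component in $\ker(R)$.
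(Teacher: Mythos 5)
Your proposal is correct and follows essentially the same route as the paper: item \ref{theorem-lemma-hilbert-representations-item-1} is read off from Theorem~\ref{thm:xi-orthogonal-ker} via $\ker(R)^\perp=\overline{\Ran(R)}$, and item \ref{theorem-lemma-hilbert-representations-item-2} is obtained by dominated convergence (your Bessel bound $\norm{\xi-S_n}^2\leq\norm{\xi}^2$ is a slightly sharper dominating function than the paper's $4\norm{\xi}^2$ from the triangle inequality). Your explicit computation $\E\langle\xi,\varphi_i\rangle\langle\xi,\varphi_j\rangle=\langle R\varphi_i,\varphi_j\rangle=\lambda_i\delta_{ij}$ for the final assertion is correct and in fact fills in a step the paper's proof leaves implicit.
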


\begin{remark}
\begin{enumerate*}[label={\textit{(\alph*)}}]
	\item Although it is beyond the scope of this work, we call attention to the fact that Theorem~\ref{thm:xi-orthogonal-ker} and Corollary~\ref{thm:hilbert-representations} provide a rigorous justification of \textsc{pca} for Hilbertian random elements.
	\item In Corollary~\ref{thm:hilbert-representations} either $J = \mathbb{N}$ or, whenever $R$ is of rank $d<\infty$, $J = \left\{1,\dots, d\right\}$.
\end{enumerate*}
\end{remark}

Proofs to the above and subsequent statements are given in Appendix~\ref{sec:proof}. We can now adapt the methodology of \cite{bathia2010identifying} to a more general setting.

\section{The model}

In what follows $(\Omega,\mathscr{F},\Prob)$ is a fixed complete probability space. Consider a stationary process $\left(\xi_t:\,t\in\mathbb{T}\right)$ of random elements with values in a separable Hilbert space $H$. Here $\mathbb{T}$ is either $\mathbb{N}\cup\left\{0\right\}$ or $\mathbb{Z}$. We assume throughout that $\xi_0$ is a centered random element in $H$ of strong second order. Of course, the stationarity assumption ensures that these properties are shared by all the $\xi_t$. Now let
\begin{equation*}
	R_k\left(h\right) := \E \langle \xi_0, h\rangle \xi_k, \qquad h\in H,
\end{equation*}
denote the $k$-th lag autocovariance operator of $\left(\xi_t\right)$, and let $\left(\lambda_j:\,j\in J\right)$ be the (possibly finite) non-increasing sequence of nonzero eigenvalues of $R_0$, repeated according to multiplicity. Here either $J = \mathbb{N}$ or, whenever $R_0$ is of rank $d<\infty$, $J = \left\{1,\dots, d\right\}$. Now for $j\in J$, let $\varphi_j\in H$ be defined by
\begin{equation*}
	R_0\left(\varphi_j\right) = \lambda_j\varphi_j,
\end{equation*}
and assume the set $\left\{\varphi_j: j\in J\right\}$ is orthonormal in $H$. Corollary~\ref{thm:hilbert-representations} and the stationarity assumption ensure that the spectral representation
\begin{equation*}
	\xi_t = \sum_{j\in J} Z_{tj} \varphi_j
\end{equation*}
holds almost surely in $H$, for all $t$, where the $Z_{tj}:=\langle \xi_t,\varphi_j\rangle$ are centered scalar random variables satisfying $\E Z_{tj}^2 = \lambda_j$ for all $t$, and $\E Z_{ti}Z_{tj} = 0$ if $i\neq j$. In applications, an important case is that in which the above sum has only finitely many terms: that is, the case in which $R_0$ is a finite rank operator. In this setting, the stochastic evolution of $\left(\xi_t\right)$ is driven by a vector process $\left(\boldsymbol{Z}_t:\,t\in\mathbb{T}\right)$, where $\boldsymbol{Z}_t = \left(Z_{t1},\dots,Z_{td}\right)$, in $\mathbb{R}^d$ (here $d$ is the rank of $R_0$). The condition that $R_0$ is of finite rank models the situation where the data lie (in principle) in an infinite dimensional space, but it is reasonable to assume that they in fact lie in a finite dimensional subspace which must be identified inferentially.

We are interested in modeling the situation where the statistician observes a process $\left(\zeta_t:\,t\in\mathbb{T}\right)$ of $H$-valued random elements, and we shall consider two settings; the simplest one occurs when
\begin{equation}\label{equation-perfect-measurement}
\zeta_t = \xi_t. 
\end{equation}
This is to be interpreted as meaning that perfect measurements of a `quantity of interest' $\xi_t$ are attainable. A more realistic scenario would admit that associated to every measurement there is an intrinsic error -- due to rounding, imprecise instruments, etc. In that case observations would be of the form
\begin{equation}\label{eq:noisy-measurement}
	\zeta_t = \xi_t + \epsilon_t.
\end{equation}
In fact, the latter model nests the `no noise' one if we allow the $\epsilon_t$ to be degenerate. Equation~\eqref{eq:noisy-measurement} is analogous to the model considered in \cite{hall2006assessing} and in \cite{bathia2010identifying}. Here $\left(\epsilon_t:\,t\in\mathbb{T}\right)$ is assumed to be \emph{noise}, in the following sense:
\begin{enumerate*}[label={\textit{(\roman*)}}, ref = {\textit{(\roman*)}}]
	\item for all $t$, $\epsilon_t\in L^2_\Prob\left(H\right)$, with $\E\epsilon_t = 0$;
	\item for each $t\neq s$, $\epsilon_t$ and $\epsilon_s$ are strongly orthogonal.
\end{enumerate*}

In the above setting, for $h,f\in H$ one has $\E\langle h, \zeta_t\rangle\langle f, \zeta_t\rangle = \langle R_0\left(h\right), f\rangle + \E\langle h,\epsilon_t\rangle\langle f,\epsilon_t\rangle$ and thus estimation of $R_0$ via a sample $\left(\zeta_1,\dots,\zeta_n\right)$ is spoiled (unless the $\epsilon_t$ are degenerate). This undesirable property has been addressed by \cite{hall2006assessing} and \cite{bathia2010identifying} respectively in the iid scenario and in the time series (with dependence) setting. The clever approach by \cite{bathia2010identifying} relies on the fact that $\E\langle h, \zeta_t\rangle\langle f, \zeta_{t+1}\rangle = \langle R_1\left(h\right),f\rangle$ (lagging filters the noise) and therefore $R_1$ can be estimated using the data $\left(\zeta_1,\dots,\zeta_n\right)$. Now an easy check shows that $\overline{\mbox{ran}\left(R_1\right)} \subset \overline{\mbox{ran}\left(R_0\right)}$. The key assumption in \cite{bathia2010identifying} is asking that this relation hold with equality:
\begin{enumerate}[label={\textup{(A\arabic*)}} ,ref={\textup{(A\arabic*)}}, start = 1]
\item \label{thm:bathia-thm-1-A1} $\overline{\mbox{ran}\left(R_1\right)} = \overline{\mbox{ran}\left(R_0\right)}$.
\end{enumerate}

Consider the operator $S := R_1^{} R_1^*$, where $*$ denotes adjoining. It is certainly positive, and compact (indeed nuclear) since $\overline{\mbox{ran}\left(R_1^{} R_1^*\right)} = \overline{\mbox{ran}\left(R_1\right)}$. Let $\left(\theta_j:\,j\in J'\right)$ be the (possibly finite) non-increasing sequence of nonzero eigenvalues of $S$, repeated according to multiplicity, and denote by $\left\{\psi_j:\,j\in J'\right\}$ the orthonormal set of associated eigenvectors. Under Assumption~\ref{thm:bathia-thm-1-A1} we have $J'=J$, and the representation
\begin{equation*}
	\xi_t = \sum_{j\in J} W_{tj}\psi_j
\end{equation*}
is seen to hold, for all $t$, almost surely in $H$ for centered scalar random variables $W_{tj} = \langle \xi_t, \psi_j\rangle$. Again, when $R_0$ is finite rank, say $\mbox{rank}\left(R_0\right)=d$, then the stochastic evolution of $\xi_t$ is driven by the finite-dimensional vector process $\left(\boldsymbol{W}_t:\,t\in\mathbb{T}\right)$, where $\boldsymbol{W}_t = \left(W_{t1},\dots,W_{td}\right)$.

\section{Main results}
Before stating our result, let us establish some notation. Define the estimator $\widehat{S} := \widehat{R}_1^{}\widehat{R}_1^*$, where $\widehat{R}_1$ is given by
\begin{equation*}
	\widehat{R}_1\left(h\right) := \frac{1}{n-1}\sum_{t=1}^{n-1} \langle \zeta_t, h\rangle \zeta_{t+1},\qquad h\in H.
\end{equation*}
Notice that $\widehat{R}_1$ is almost surely a finite rank operator, say of rank $q$, with $q\leq n-1$ almost surely, and thus $\widehat{S}$ is also of finite rank $q$. Let $\big(\widehat{\theta}_1,\widehat{\theta}_2,\dots\big)$ denote the non-increasing sequence of eigenvalues of $\widehat{S}$, repeated according to multiplicity. Clearly $\widehat{\theta}_j = 0$ if $j > n-1$. Denote by $\big\{\widehat{\psi}_1,\widehat{\psi}_2,\dots\big\}$ the orthonormal basis of associated eigenfunctions. Also, for a closed subspace $V\subset H$, let $\Pi_V$ denote the orthogonal projector onto $V$. Let $M := \overline{\mbox{ran}\left(R_0\right)}$, and for conformable $k$ put $\widehat{M}_k := \vee_{j=1}^k \widehat{\psi}_j$.

\begin{theorem}\label{thm:bathia-thm-1}
Let \ref{thm:bathia-thm-1-A1} and the following conditions hold.
\begin{enumerate}[label={\textup{(A\arabic*)}} ,ref={\textup{(A\arabic*)}}, start = 2]
\item \label{thm:bathia-thm-1-A2} $\left(\zeta_t:\,t\in\mathbb{T}\right)$ is strictly stationary and $\psi$-mixing, with the mixing coefficient satisfying the condition $\sum_{k=1}^\infty k\,\psi^{1/2}\left(k\right) < \infty$;
\item \label{thm:bathia-thm-1-A3} $\zeta_t\in L^4_\Prob\left(H\right)$, for all $t$;
\item \label{thm:bathia-thm-1-A4} $\epsilon_t$ and $\xi_s$ are strongly orthogonal, for all $t$ and $s$.
\end{enumerate}
Then,
\begin{enumerate}[label={\textit{(\roman*)}}, ref = {\textit{(\roman*)}}]
	\item \label{thm:bathia-thm-1-itm1} $\big\Vert \widehat{S} - S \big\Vert_2 = O_\Prob\left(n^{-1/2}\right)$;
	\item \label{thm:bathia-thm-1-itm3} $\sup_{j\in J}\big\vert \widehat{\theta}_j - \theta_j \big\vert = O_\Prob\left(n^{-1/2}\right)$.
\end{enumerate}
Moreover, if
\begin{enumerate}[label={\textup{(A\arabic*)}} ,ref={\textup{(A\arabic*)}}, start = 5]
\item \label{thm:bathia-thm-1-A5} $\ker\left(S - \theta_j\right)$ is one-dimensional, for each nonzero eigenvalue $\theta_j$ of $S$,
\end{enumerate}
holds, then 
\begin{enumerate}[label={\textit{(\roman*)}}, ref = {\textit{(\roman*)}}, start = 3]
\item \label{thm:bathia-thm-1-itm2} $\sup_{j\in J}\big\Vert \widehat{\psi}_j - \psi_j \big\Vert = O_\Prob\left(n^{-1/2}\right)$.
\end{enumerate}
If additionally $S$ is of rank $d < \infty$, then
\begin{enumerate}[label={\textit{(\roman*)}}, ref = {\textit{(\roman*)}}, start = 4]
	\item \label{thm:bathia-thm-1-itm4} $\widehat{\theta}_j = O_\Prob\left(n^{-1}\right)$, for all $j > d$;
	\item \label{thm:bathia-thm-1-itm5} $\big\Vert \Pi_{M}\big(\widehat{\psi}_j\big) \big\Vert = O_\Prob\left(n^{-1/2}\right)$, for all $j > d$.
\end{enumerate}
\end{theorem}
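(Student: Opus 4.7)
The plan is to reduce everything to a single quantitative fact: $\widehat R_1$ is a $\sqrt{n}$-consistent estimator of $R_1$ in Hilbert-Schmidt norm. Once that is in hand, items \ref{thm:bathia-thm-1-itm1}--\ref{thm:bathia-thm-1-itm5} will follow by routine operator-theoretic manipulations, and I would deduce the five claims in order. To establish the basic rate, I would first use Assumption \ref{thm:bathia-thm-1-A4} together with the mutual strong orthogonality of the noise sequence to check that $\E\langle \zeta_t, h\rangle \zeta_{t+1} = R_1(h)$, so that $\widehat R_1$ is unbiased for $R_1$. Then, writing
\begin{equation*}
\widehat R_1 - R_1 = \frac{1}{n-1}\sum_{t=1}^{n-1} U_t,\qquad U_t(h) := \langle \zeta_t, h\rangle \zeta_{t+1} - R_1(h),
\end{equation*}
as the average of centered $\mathrm{HS}(H)$-valued summands, I would expand $\E\|\widehat R_1 - R_1\|_2^2$ as a double sum, control the diagonal terms via the fourth-moment assumption \ref{thm:bathia-thm-1-A3}, and treat the off-diagonal terms by a $\psi$-mixing covariance inequality in Hilbert space; the condition $\sum k\,\psi^{1/2}(k) < \infty$ is precisely what makes the off-diagonal contribution $O(n^{-1})$, so that Chebyshev yields $\|\widehat R_1 - R_1\|_2 = O_\Prob(n^{-1/2})$.

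Items \ref{thm:bathia-thm-1-itm1}--\ref{thm:bathia-thm-1-itm2} will then follow quickly. For \ref{thm:bathia-thm-1-itm1}, I would expand $\widehat S - S = (\widehat R_1 - R_1)\widehat R_1^* + R_1(\widehat R_1^* - R_1^*)$ and combine the submultiplicativity $\|AB\|_2 \leq \|A\|_2 \|B\|$ with $\|\widehat R_1\| = O_\Prob(1)$. Item \ref{thm:bathia-thm-1-itm3} is immediate from Weyl's inequality for self-adjoint compact operators, $\sup_j|\widehat\theta_j - \theta_j| \leq \|\widehat S - S\| \leq \|\widehat S - S\|_2$. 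For \ref{thm:bathia-thm-1-itm2}, under the simple-spectrum assumption \ref{thm:bathia-thm-1-A5}, I would apply a Davis-Kahan-type perturbation bound of the form $\|\widehat\psi_j - \psi_j\| \leq 2\sqrt{2}\,\|\widehat S - S\|/\delta_j$, with $\delta_j$ the spectral gap at $\theta_j$ and the signs of $\widehat\psi_j$ chosen to match, to convert the rate of \ref{thm:bathia-thm-1-itm1} into the desired one.

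The last two items exploit the rank constraint. Because $\mathrm{ran}(R_1)$ and $\mathrm{ran}(R_1^*)$ both lie in $M$, one has $R_1^* v = 0$ for every $v\in M^\perp$. For \ref{thm:bathia-thm-1-itm4}, the Courant-Fischer characterization applied with trial subspace $M$ gives, for $j > d$,
\begin{equation*}
\widehat\theta_j \leq \widehat\theta_{d+1} \leq \sup_{v\in M^\perp,\,\|v\|=1}\|\widehat R_1^* v\|^2 = \sup_{v\in M^\perp,\,\|v\|=1}\|(\widehat R_1^* - R_1^*) v\|^2 \leq \|\widehat R_1 - R_1\|_2^2 = O_\Prob(n^{-1}).
\end{equation*}
For \ref{thm:bathia-thm-1-itm5}, projecting the eigenvector equation $\widehat S \widehat\psi_j = \widehat\theta_j \widehat\psi_j$ against $\psi_i$ (for $i\leq d$) gives $\langle\widehat\psi_j,\psi_i\rangle = \langle(\widehat S - S)\widehat\psi_j,\psi_i\rangle / (\widehat\theta_j - \theta_i)$; by \ref{thm:bathia-thm-1-itm4} the denominator is bounded away from zero with probability tending to one, so each inner product is $O_\Prob(n^{-1/2})$, and summing the squares over the finite range $i=1,\dots,d$ yields $\|\Pi_M \widehat\psi_j\| = O_\Prob(n^{-1/2})$.

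The main obstacle is the mixing computation behind $\|\widehat R_1 - R_1\|_2 = O_\Prob(n^{-1/2})$: one must expand the Hilbert-Schmidt inner products $\langle U_t, U_s\rangle_2$ as traces of products of rank-one operators and invoke a quantitative $\psi$-mixing inequality for Hilbert-valued functionals. The bookkeeping there is substantially heavier than anywhere else in the proof, and the summability condition in \ref{thm:bathia-thm-1-A2} is finely tuned to yield exactly the rate needed. All remaining items are standard operator-theoretic consequences.
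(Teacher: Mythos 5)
Your proposal is sound, but it takes a genuinely different route from the paper. The paper's entire proof of Theorem~\ref{thm:bathia-thm-1} is a one-line reduction: fix a unitary isomorphism $U:H\to L^2\left[0,1\right]$, transport $\zeta_t$, $R_1$, $S$ and their estimators through $U$, and invoke the corresponding theorem of \cite{bathia2010identifying} verbatim (unitarity preserves Hilbert--Schmidt norms, spectra, eigenvector distances and projections, and $\psi$-mixing is defined through generated $\sigma$-fields, so it is preserved as well). You instead reconstruct the argument from scratch inside $H$: the centering of $\widehat R_1$ via \ref{thm:bathia-thm-1-A4} and the mutual orthogonality of the noise, the second-moment bound on $\big\Vert\widehat R_1-R_1\big\Vert_2$ via \ref{thm:bathia-thm-1-A3} and a $\psi$-mixing covariance inequality, the splitting $\widehat S-S=(\widehat R_1-R_1)\widehat R_1^*+R_1(\widehat R_1^*-R_1^*)$, Weyl/Lidskii for \ref{thm:bathia-thm-1-itm3}, a Davis--Kahan (Bosq Lemma 4.3) bound for \ref{thm:bathia-thm-1-itm2}, Courant--Fischer with trial subspace $M^\perp$ for \ref{thm:bathia-thm-1-itm4}, and the eigenvector identity $\langle\widehat\psi_j,\psi_i\rangle=\langle(\widehat S-S)\widehat\psi_j,\psi_i\rangle/(\widehat\theta_j-\theta_i)$ for \ref{thm:bathia-thm-1-itm5}. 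This is, in substance, the proof of \cite{bathia2010identifying} transcribed to the abstract setting; what you buy is a self-contained argument that never leaves $H$ and makes explicit where each hypothesis enters, at the cost of having to carry out the mixing computation (which you correctly identify as the only genuinely heavy step, and which the paper silently outsources).

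Two caveats. First, the mixing estimate is only sketched; since it is the one place where \ref{thm:bathia-thm-1-A2} and \ref{thm:bathia-thm-1-A3} do real work, a complete write-up would need the Hilbert-valued covariance inequality stated and applied explicitly (or a citation to the corresponding lemma of \cite{bathia2010identifying}). Second, your Davis--Kahan step for item \ref{thm:bathia-thm-1-itm2} produces a constant $2\sqrt{2}/\delta_j$ depending on the spectral gap $\delta_j$; when $J$ is infinite these gaps shrink to zero, so the bound is not uniform in $j$ and the stated $\sup_{j\in J}$ only follows for finite $J$ (or pointwise in $j$). This limitation is inherited from the theorem as stated and from \cite{bathia2010identifying}, where the relevant operator has finite rank, so it is not a defect of your argument relative to the paper's, but it is worth flagging if you write the proof out in full.
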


\begin{remark}
\begin{enumerate*}[label={\textit{(\alph*)}}]
\item Assumption~\ref{thm:bathia-thm-1-A5} ensures that $\psi_j$ is an identifiable statistical parameter. It is assumed that the `correct' version (among $\psi_j$ and $-\psi_j$) is being picked. See Lemma 4.3 in \cite{bosq2000linear};
\item Since the operator $\widehat{S}$ is almost surely of finite rank, items \ref{thm:bathia-thm-1-itm3} and \ref{thm:bathia-thm-1-itm4} imply the following. If $\mathrm{rank}(S)=d<\infty$, then for $j=1,\dots,d$, $\widehat{\theta}_j$ is eventually non-zero and arbitrarily close to $\theta_j$, and the remaining nonzero $\widehat{\theta}_j$ for $j>d$ (if any) are eventually arbitrarily close to zero. Otherwise, eventually $\widehat{\theta}_j>0$ for all $j$ (but notice that this cannot occur uniformly in $j$: it is always the case that $\widehat{\theta}_j=0$ for $j>n-1$). This property can be used to propose consistent estimators of $d$.
\end{enumerate*}
\end{remark}

\begin{corollary}\label{thm:bathia-corollary}
Let Assumptions~\ref{thm:bathia-thm-1-A1}--\ref{thm:bathia-thm-1-A4} hold. Let $N_j := \ker\left(S - \theta_j\right)$ and $\widehat{N}_j := \ker\big(\widehat{S} - \widehat{\theta}_j\big)$. Then,
\begin{enumerate}[label={\textit{(\roman*)}}, ref={\textit{(\roman*)}}]
\item \label{thm:bathia-corollary-itm1} $\norm{\Pi_{\widehat{N}_j} - \Pi_{\vphantom{\widehat{N}_j}N_j}}_2 = O_\Prob\left(n^{-1/2}\right)$, for all $j$ such that $N_j$ is one-dimensional;
\item \label{thm:bathia-corollary-itm2} if $S$ is of rank $d < \infty$, then $\norm{\Pi_{\widehat{M}_d} - \Pi_{{\vphantom{\widehat{M}_d}}M}}_2 = O_\Prob\left(n^{-1/2}\right)$;
\item \label{thm:bathia-corollary-itm3} if $S$ is of rank $d < \infty$, there exists a metric $\rho$ on the collection of finite-dimensional subspaces of $H$ such that $\rho\big(\widehat{M}_d, M\big) = O_\Prob\left(n^{-1/2}\right)$.
\end{enumerate}
\end{corollary}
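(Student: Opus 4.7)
The strategy is to reduce all three parts to the Riesz spectral projection calculus, invoking only items~\ref{thm:bathia-thm-1-itm1}, \ref{thm:bathia-thm-1-itm3} and \ref{thm:bathia-thm-1-itm4} of Theorem~\ref{thm:bathia-thm-1}; Assumption~\ref{thm:bathia-thm-1-A5} is not required, which is precisely why the Riesz route is preferable to directly perturbing individual eigenvectors.

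For part~\ref{thm:bathia-corollary-itm1}, fix $j$ with $\dim N_j=1$. Since $\theta_j$ is isolated in $\sigma(S)$, choose a small positively oriented circle $\Gamma\subset\mathbb{C}$ centered at $\theta_j$ enclosing no other point of $\sigma(S)$. By Theorem~\ref{thm:bathia-thm-1}\ref{thm:bathia-thm-1-itm3}, with probability tending to one $\widehat{\theta}_j$ is the unique element of $\sigma(\widehat{S})$ inside $\Gamma$; on this event both orthogonal projectors admit the Riesz representation
\[
\Pi_{N_j}=-\frac{1}{2\pi i}\oint_\Gamma(S-zI)^{-1}\,dz,\qquad \Pi_{\widehat{N}_j}=-\frac{1}{2\pi i}\oint_\Gamma(\widehat{S}-zI)^{-1}\,dz.
\]
The resolvent identity $(\widehat{S}-zI)^{-1}-(S-zI)^{-1}=(\widehat{S}-zI)^{-1}(S-\widehat{S})(S-zI)^{-1}$, together with the uniform boundedness of the two resolvents on $\Gamma$ and the submultiplicativity of $\norm{\cdot}_2$ against the operator norm, gives $\norm{\Pi_{\widehat{N}_j}-\Pi_{N_j}}_2\leq C_\Gamma\norm{\widehat{S}-S}_2$, and Theorem~\ref{thm:bathia-thm-1}\ref{thm:bathia-thm-1-itm1} closes the argument. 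For part~\ref{thm:bathia-corollary-itm2}, under Assumption~\ref{thm:bathia-thm-1-A1} we have $M=\overline{\mathrm{ran}(R_0)}=\overline{\mathrm{ran}(S)}=\mathrm{span}\{\psi_1,\dots,\psi_d\}$, so $\Pi_M$ is itself the Riesz projector of $S$ over any contour $\Gamma'$ enclosing $\{\theta_1,\dots,\theta_d\}$ and excluding $0$. By Theorem~\ref{thm:bathia-thm-1}\ref{thm:bathia-thm-1-itm3} combined with~\ref{thm:bathia-thm-1-itm4}, with probability tending to one the $d$ largest eigenvalues of $\widehat{S}$ lie inside $\Gamma'$ while the remaining (spurious) ones are pushed toward $0$ and lie outside; hence the Riesz projector of $\widehat{S}$ over $\Gamma'$ coincides with $\Pi_{\widehat{M}_d}$ on this event, and the same resolvent-identity bound delivers the stated rate.

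Part~\ref{thm:bathia-corollary-itm3} then follows by setting $\rho(V,W):=\norm{\Pi_V-\Pi_W}_2$ on finite-dimensional subspaces of $H$: symmetry and the triangle inequality are inherited from $\norm{\cdot}_2$, while $\rho(V,W)=0$ forces $\Pi_V=\Pi_W$ and hence $V=\mathrm{ran}(\Pi_V)=\mathrm{ran}(\Pi_W)=W$. The main obstacle throughout is purely probabilistic: identifying the Riesz contour integral for $\widehat{S}$ with the desired orthogonal projector requires $\sigma(\widehat{S})$ to be separated from the fixed contour, which is exactly what Theorem~\ref{thm:bathia-thm-1}\ref{thm:bathia-thm-1-itm3} and~\ref{thm:bathia-thm-1-itm4} secure on an event of probability tending to one; once this holds, the stated rate is immediate from $\norm{\widehat{S}-S}_2=O_\Prob(n^{-1/2})$ via a routine Hilbert--Schmidt-norm estimate for the contour integral.
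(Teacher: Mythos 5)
Your argument is correct, but it is not the route the paper takes: the paper's proof of this corollary is a one-line deferral to the proof of Theorem~2 in \cite{bathia2010identifying}, which works in $L^2[0,1]$ with a specific discrepancy measure between subspaces and with explicit eigenfunction expansions. Your Riesz-projector calculus is self-contained and arguably cleaner. By integrating the resolvent over a contour separating the relevant group of eigenvalues, you obtain parts \textit{(i)} and \textit{(ii)} directly from $\Vert\widehat S - S\Vert_2 = O_\Prob(n^{-1/2})$ together with the eigenvalue bounds \ref{thm:bathia-thm-1-itm3} and \ref{thm:bathia-thm-1-itm4} of Theorem~\ref{thm:bathia-thm-1}, without ever perturbing individual eigenvectors; this is precisely what lets you avoid \ref{thm:bathia-thm-1-A5} in part \textit{(ii)} even when $\theta_1,\dots,\theta_d$ carry multiplicities, and it yields an explicit metric $\rho(V,W)=\norm{\Pi_V-\Pi_W}_2$ for part \textit{(iii)} rather than one imported from the reference. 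Two minor points you should make explicit: since $H$ is a real Hilbert space, the contour integrals must be taken in the complexification $H_{\mathbb{C}}$, with the resulting Riesz projectors restricting to the orthogonal projectors on $H$ because $S$ and $\widehat S$ are self-adjoint (so the Riesz and spectral projections coincide); and in part \textit{(i)} the constant $C_\Gamma$ depends on the spectral gap at $\theta_j$, so the rate is for each fixed $j$ rather than uniform in $j$ --- which is all the corollary asserts. With those caveats recorded, your proof stands on its own and is a useful alternative to the citation given in the paper.
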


\begin{remark}
\begin{enumerate*}[label={\textit{(\alph*)}}]
\item Observe that, when the process $(\xi_t)$ is not centered, evidently all the above results would still hold by replacing $\zeta_t$ by $\zeta_t-\E\xi_0$ and $\xi_t$ by $\xi_t-\E\xi_0$, but this is not practical since in general $\E\xi_0$ is not known to the statistician. However, this does not pose a problem, since under mild conditions we have $1/n\sum_{t=1}^n\zeta_t\overset{a.s}\rightarrow \E\xi_0$, and thus all the results still hold with $\zeta_t$ and $\xi_t$ replaced respectively by $\zeta_t - 1/n\sum_{t=1}^n\zeta_t$ and $\xi_t - 1/n\sum_{t=1}^n\zeta_t$;
\item The key assumption in \cite{bathia2010identifying} would be translated in our setting to the condition that, for some $k\geq 1$, the identity $\overline{\mbox{ran}\left(R_{k}\right)} = \overline{\mbox{ran}\left(R_0\right)}$ holds. For simplicity we have assumed that $k=1$, but of course the stated results remain true if we take $k$ to be any integer $\geq 1$ and redefine $S$ and $\widehat{S}$ appropriately. Indeed the stated results remain true if we define $S = \left(n-p\right)^{-1}\sum_{k=1}^p R_k^{} R_k^*$, where $p$ is an integer such that $\overline{\mbox{ran}\left(R_{k}\right)} = \overline{\mbox{ran}\left(R_0\right)}$ holds for some $k\leq p$. In statistical applications, a recommended approach would be to estimate $S$ defined in this manner. In any case, computation of the eigenvalues and eigenvectors of $\widehat{S}$ can be carried out directly through the spectral decomposition of a convenient $n-p \times n-p$ matrix. The method is discussed in \cite{bathia2010identifying}. Notice that if $R_0$ is of rank one, then asking that $\overline{\mbox{ran}\left(R_{k}\right)} = \overline{\mbox{ran}\left(R_0\right)}$ holds for some $k$ corresponds to the requirement that the times series $\left(Z_{t1}:\,t\in\mathbb{T}\right)$ is correlated at some lag $k$. Otherwise we would find ourselves in the not very interesting scenario (for our purposes) of an uncorrelated time series.
\end{enumerate*}
\end{remark}

\section{Concluding remarks}
In this paper we have provided consistency results regarding estimation of the spectral representation of Hilbertian time series, in a setting with imperfect measurements. This generalizes a result from \cite{bathia2010identifying}. The generalization relies on an important property of centered random elements in a separable Hilbert space -- see Theorem~\ref{thm:xi-orthogonal-ker}. Further work should be directed at obtaining a Central Limit Theorem for the operator $\widehat{S}$, which would have the important consequence of providing Central Limit Theorems for its eigenvalues (via Theorem 1.2 in \cite{mas2003perturbation}), potentially allowing one to propose statistical tests for these parameters. The term `spectral' in the title of this work refers, of course, to the spectral representation of the operator $S$ and not to the spectral representation of the time series $\left(\xi_t\right)$ in the usual sense.

\begin{appendices}

\numberwithin{equation}{section}

\section{Notation and mathematical background}
As in the main text we let $(\Omega,\mathscr{F},\Prob)$ denote a complete probability space, i.e. a probability space with the additional requirement that subsets $N\subset\Omega$ with outer probability zero are elements of $\mathscr{F}$. Let $H$ be a separable Hilbert space with inner-product $\langle\cdot,\cdot\rangle$ and norm $\Vert\cdot\Vert$. A Borel measurable\footnote{There are notions of strong and weak measurability but for separable spaces they coincide.} map $\xi:\Omega\rightarrow H$ is called a \emph{random element with values in $H$} (also: Hilbertian random element). For $q\geq 1$, if $\E \Vert{\xi}\Vert^q < \infty$ we say that $\xi$ is of \emph{strong order $q$} and write $\xi\in L^q_\Prob\left(H\right)$. In this case, there is a unique element $h_\xi\in H$ satisfying the identity $\E\langle \xi, f\rangle = \langle h_\xi, f\rangle$ for all $f \in H$. The element $h_\xi$ is called the \emph{expectation} of $\xi$ and is denoted be $\E\xi$. If $\E\xi = 0$ we say that $\xi$ is \emph{centered}. If $\xi$ and $\eta$ are centered random elements in $H$ of strong order $2$, they are said to be (mutually) strongly orthogonal if, for each $h, f\in H$, it holds that $\E\langle h, \xi\rangle\langle f, \eta\rangle = 0 $.

Denote by $\mathcal{L}\left(H\right)$ the Banach space of bounded linear operators acting on $H$. Let $A\in\mathcal{L}\left(H\right)$. If for some (and hence, all) orthonormal basis $\left(e_j\right)$ of $H$ one has $\norm{A}_2 := \sum_{j=1}^\infty \norm{A\left(e_j\right)}^2 < \infty$, we say that $A$ is a \emph{Hilbert-Schmidt operator}. The set $\mathcal{L}_2\left(H\right)$ of Hilbert-Schmidt operators is itself a separable Hilbert space with inner-product $\langle A, B\rangle_2 = \sum_{j=1}^\infty \langle A\left(e_j\right), B\left(e_j\right)\rangle$, with $\norm{\cdot}_2$ being the induced norm. An operator $T \in \mathcal{L}\left(H\right)$ is said to be \emph{nuclear}, or \emph{trace-class}, if $T = AB$ for some Hilbert-Schmidt operators $A$ and $B$. If $\xi\in L^2_\Prob\left(H\right)$, its \emph{covariance operator} is the nuclear operator $R_\xi\left(h\right) := \E\langle\xi,h\rangle\xi$, $h\in H$. More generally, if $\xi, \eta\in L^2_\Prob\left(H\right)$, their \emph{cross-covariance} operator is defined, for $h\in H$, by $R_{\xi,\eta}\left(h\right) := \E\langle\xi, h\rangle\eta$. In the main text we denote by $R_k$ the cross-covariance operator of $\xi_0$ and $\xi_k$.

For a survey on strong mixing processes, including the definition of $\psi$-mixing in Assumption~\ref{thm:bathia-thm-1-A2}, we refer the reader to \cite{bradley2005basic}.

\section{Proofs}\label{sec:proof}
\begin{proof}[Proof of Theorem~\ref{thm:xi-orthogonal-ker}]
Let $(e_j)$ be a basis of $\ker(R)$. It suffices to show that $\E \left\vert \langle \xi, e_j\rangle\right\vert^2=0$ for each $j$. Indeed, this implies that there exist sets $E_j$, $\Prob(E_j)=0$ and $\langle \xi(\omega), e_j\rangle=0$ for $\omega \notin E_j$. Thus $\langle \xi(\omega),e_j\rangle=0$ for all $j$ as long as $\omega\notin \bigcap E_j$ with $\Prob\left(\bigcap E_j\right)=0$. But $\E \left\vert \langle \xi, e_j\rangle\right\vert^2 = \E \langle  \xi, e_j\rangle \langle \xi, e_j\rangle=\E \langle {\langle  \xi, e_j\rangle}\xi, e_j\rangle={\langle \E\langle \xi, e_j\rangle\xi, e_j\rangle}=\langle R(e_j),e_j\rangle=0$.
\end{proof}

\begin{proof}[Proof of Corollary~\ref{thm:hilbert-representations}] Item~\ref{theorem-lemma-hilbert-representations-item-1} is just another way of stating the Lemma. For item~\ref{theorem-lemma-hilbert-representations-item-2}, first notice that the functions $\omega\mapsto \langle \xi(\omega),\varphi_j\rangle\varphi_j$, $j\in J$, form an orthogonal set in $L^2_\Prob(H)$ (although not \emph{orthonormal}). We must show that $\int \Vert \xi(\omega) - \sum_{j=1}^n\langle\xi(\omega),\varphi_j\rangle\varphi_j\Vert^2 d\Prob(\omega) \rightarrow 0$. Let $g_n(\omega):= \Vert \xi(\omega) - \sum_{j=1}^n\langle\xi(\omega),\varphi_j\rangle\varphi_j\Vert$. By item~\ref{theorem-lemma-hilbert-representations-item-1} $g_n(\omega)\rightarrow 0$ almost surely. Also, $0\leq g_n(\omega)\leq 2 \left\Vert \xi(\omega)\right\Vert$. So $g_n^2(\omega)\rightarrow 0$ and $g_n^2(\omega)\leq 4\left\Vert\xi(\omega)\right\Vert^2$. Now apply Lebesgue's Dominated Convergence Theorem.
\end{proof}

\begin{proof}[Proof of Theorem~\ref{thm:bathia-thm-1}]
One only has to consider an isomorphism $U:H\rightarrow L^2\left[0,1\right]$. The proof is the same as in \cite{bathia2010identifying}.
\end{proof}

\begin{proof}[Proof of Corollary~\ref{thm:bathia-corollary}]
See the proof of Theorem~2 in \cite{bathia2010identifying}.
\end{proof}

\begin{remark}
The hypothesis that $\xi$ is centered in Theorem~\ref{thm:xi-orthogonal-ker} cannot be weakened, as the following simple example shows. Let $H=\mathbb{R}^2$ and let $\xi=(\xi_1,\xi_2)$ where $\xi_1$ is a (real valued) standard normal and $\xi_2=1$ almost surely. Then $R\equiv (R_{ij})$ is the matrix with all entries equal to zero except for $R_{11}$ which is equal to $1$, and obviously one has $\Prob(\xi\perp\ker(R))=0$.
\end{remark}

\end{appendices}


\end{document}